\numberwithin{equation}{section}
\theoremstyle{plain}
\newtheorem{Th}{Theorem}[section]
\newtheorem{Lemma}[Th]{Lemma}
\newtheorem{Cor}[Th]{Corollary}
\newtheorem{Prop}[Th]{Proposition}
 \theoremstyle{definition}
\newtheorem{Def}[Th]{Definition}
\newtheorem{?}[Th]{Problem}
\newcommand{\Q}{\mathbb{Q}}
\newcommand{\A}{Ad^0\bar{\rho}}
\newcommand{\Z}{\mathbb{Z}}
\newcommand{\Hp}{H_{\Q(\mu_p)}'}
\newcommand{\F}{\mathbb{F}}
\newcommand{\D}{\text{Diag}(\bar{\rho})}
\DeclareSymbolFont{cyrletters}{OT2}{wncyr}{m}{n}
\DeclareFontFamily{U}{wncy}{}
    \DeclareFontShape{U}{wncy}{m}{n}{<->wncyr10}{}
    \DeclareSymbolFont{mcy}{U}{wncy}{m}{n}
    \DeclareMathSymbol{\Sh}{\mathord}{mcy}{"58}
\newcommand\mtx[4] { \left( {\begin{array}{cc}
   #1 & #2 \\
   #3 & #4 \\
  \end{array} } \right)}
\begin{document}

\title{Constructing Certain Special Analytic Galois Extensions}
  
\author{Anwesh Ray}
\address{Cornell University \\ Department of Mathematics \\
  Malott Hall, Ithaca, NY 14853-4201 USA} 
\email{ar2222@cornell.edu}
\maketitle
\begin{abstract}
For every prime $p\geq 5$ for which a certain condition on the class group $\text{Cl}(\Q(\mu_p))$ is satisfied, we construct a $p$-adic analytic Galois extension of the infinite cyclotomic extension $\Q(\mu_{p^{\infty}})$ with some special ramification properties. In greater detail, this extension is unramified at primes above $p$ and tamely ramified above finitely many rational primes and its Galois group over $\Q(\mu_{p^{\infty}})$ is isomorphic to a finite index subgroup of $\text{SL}_2(\Z_p)$ which contains the principal congruence subgroup. For the primes $107,139,271$ and $379$ such extensions were first constructed by Ohtani and Blondeau. The strategy for producing these special extensions at an abundant number of primes is through lifting two-dimensional reducible Galois representations which are diagonal when restricted to $p$.
\end{abstract}
\section{Introduction}
Let $p$ be a prime number, the tame Fontaine-Mazur conjecture (conjecture 5a in \cite{FontaineMazur}) posits that an infinite Galois extension of a number field whose Galois group over $\Q$ is isomorphic to a $p$-adic analytic group is either ramified at infinitely many primes or is infinitely ramified at a prime dividing $p$. It is natural to ask if such extensions exist once we pass up an infinite cyclotomic extension $\Q(\mu_{p^{\infty}})$. We show that there are abundantly many primes $p$ for which there exists such an extension of $\Q(\mu_{p^{\infty}})$ whose Galois group is isomorphic to a finite index subgroup of $\text{SL}_2(\Z_p)$ with tame ramification above finitely many rational primes and unramified at primes above $p$.
\par Let $p\geq 5$ be a prime and $\zeta_p=e^{\frac{2\pi i}{p}}$. Denote by $\Hp$ the $p$-Hilbert Class field of $\Q(\mu_p)$, we identify $\text{Gal}(\Hp/\Q(\mu_p))$ with the $p$-part of the class group $\text{Cl}(\Q(\mu_p))$.
\par We associate a Galois representation to class group data. Denote by \[\mathcal{C}:=\text{Cl}(\Q(\mu_p))\otimes \F_p\] and $\bar{\chi}$ the mod $p$ cyclotomic character.
The Galois module $\mathcal{C}$ decomposes into isotypic components
\[\mathcal{C}=\bigoplus_{i=0}^{p-2}\mathcal{C}(\bar{\chi}^i),\]
where 
\begin{equation}\label{isotypicdef}
    \mathcal{C}(\bar{\chi}^i)=\{x\in \mathcal{C}\mid g\cdot x=\bar{\chi}^i(g) x\text{ for all }g\in \text{Gal}(\Q(\mu_p)/\Q)\}.
\end{equation}
\par By class field theory $\F_p$-line in $\mathcal{C}(\bar{\chi}^i)$ gives rise to an extension $L/\Q$ contained in the $p$-Hilbert Class field $H'_{\Q(\mu_p)}$. Since such a line is Galois stable, the extension $L$ is Galois over $\Q$. Any choice of isomorphism $\text{Gal}(L/\Q(\mu_p))\xrightarrow{\sim} \F_p$ gives rise to an element \[\beta\in H^1\left(G_{\Q},\F_p(\bar{\chi}^i)\right)\simeq \text{Hom}\left(G_{\Q(\mu_p)},\F_p(\bar{\chi}^i)\right)^{\text{Gal}(\Q(\mu_p)/\Q)}.\] This class does not depend on the choice of isomorphism. The class $\beta$ coincides with a reducible Galois representation
\[\bar{\rho}:G_{\Q}\rightarrow \text{GL}_2(\F_p)\]
defined by $\bar{\rho}=\mtx{\bar{\chi}^i}{\beta}{0}{1}$. The representation satisfies a number of conditions:
\begin{enumerate}
\item\label{one}$\bar{\rho}_{\restriction G_{\Q(\mu_p)}}$ is indecomposable,
\item\label{two} $\bar{\rho}_{\restriction G_{\Q(\mu_p)}}=\mtx{1}{\beta}{0}{1}$ is unramified at every prime,
\item\label{three} the local Galois representation at $p$ splits into a sum of characters $\bar{\rho}_{\restriction G_p}\simeq\mtx{\bar{\chi}_{\restriction G_p}^i}{0}{0}{1}.$
\end{enumerate}
\par The extension $L\neq \Q(\mu_p)$ and thus the cohomology class $\beta$ is non-trivial, from which condition $\ref{one}$ follows. As $L$ is contained in the Hilbert class field of $\Q(\mu_p)$ every prime of $\Q(\mu_p)$ is unramified in $L$. Condition $\ref{two}$ follows from this. By class field theory the principal prime-ideal $(1-\zeta_p)$ is split in the Hilbert class field of $\Q(\mu_p)$ and thus in $L$. Let $E$ denote the completion of $\Q(\mu_p)$ at $(1-\zeta_p)$. One deduces that $\beta_{\restriction G_E}\in H^1(G_E, \F(\bar{\chi}^i))$ is trivial. We observe that the order of $\text{Gal}(E/\Q_p)$ is coprime to $p$. From a standard argument appealing to the vanishing of $H^1(\text{Gal}(E/\Q_p),\F(\bar{\chi}^i))$ and the inflation-restriction sequence it follows that $\beta_{\restriction G_p}=0$. Condition $\ref{three}$ follows as a consequence.
\par For an introduction to the deformation theory of Galois representations, the reader may consult \cite{mazur}. We examine deformations of $\bar{\rho}$ with some prescribed local properties. In particular, the local deformation condition at $p$ should ensure that on passing up the infinite cyclotomic extension $\Q(\mu_{p^{\infty}})$, our deformations are unramified at primes $\mathfrak{p}|p$.
\par Our construction is based on that of Hamblen and Ramakrishna who in \cite{hamblenramakrishna} deduce the weak form of Serre's conjecture for a reducible residual Galois representation which satisfies some further conditions. Their method is based on a local to global deformation theoretic argument. Implicit to this construction is a choice of a local deformation condition at each prime at which the residual representation is allowed to ramify. Deformations of the residual representation are to satisfy these local conditions. In particular, at $p$ there is a choice of a local deformation condition which satisfies some key properties, namely the deformation condition is \textit{liftable} and \textit{balanced} (see Definition $\ref{liftablebalanced}$). Hamblen and Ramakrishna prescribe the \textit{ordinary} deformation condition (cf. \cite{RamFM} and \cite{Taylor}) and we shall on the other hand be working with the \textit{diagonal} deformation condition (see Definition $\ref{extraordinary}$). This deformation condition is \textit{liftable} and \textit{balanced} as we shall show (see Proposition $\ref{tangentequality}$). We shall then deduce be able to deduce the following theorem.
\begin{Th}\label{1.1}
Let $p\geq 5$ be a prime and $\mathcal{C}:=\text{Cl}(\Q(\mu_p))\otimes \F_p$. Suppose that there exists an odd integer $i\neq \frac{p-1}{2}$ such that $2\leq i\leq p-3$ such that $\mathcal{C}(\bar{\chi}^i)\neq 0$. Let $\bar{\rho}=\mtx{\bar{\chi}^i}{\ast}{0}{1}$ be the Galois representation associated to an $\F_p$-line in $\mathcal{C}(\bar{\chi}^i)$. There exist infinitely many lifts $\rho$ of $\bar{\rho}$
\[ \begin{tikzpicture}[node distance = 2.0 cm, auto]
            \node(G) at (0,0) {$G_{\Q}$};
             \node (A) at (3,0){$\text{GL}_2(\F_p)$,};
             \node (B) at (3,2){$\text{GL}_2(\Z_p)$};
      \draw[->] (G) to node [swap]{$\bar{\rho}$} (A);
       \draw[->] (B) to node{} (A);
      \draw[->] (G) to node {$\rho$} (B);
      \end{tikzpicture}\]such that the following conditions are satisfied
\begin{itemize}
\item $\rho(G_{\Q(\mu_{p^{\infty}})})$ contains the principal congruence subgroup of $\text{SL}_2(\Z_p)$
\item the determinant of $\rho$ is $\chi^{i+p^2(p-1)}$
\item $\rho_{\restriction G_p}$ is a direct sum of characters
$\rho_{\restriction G_p}=\varphi_1\oplus \varphi_2$. In particular, $\rho$ is abelian at $I_p$.
\item $\rho$ is unramified outside a finite set of primes. 
\end{itemize}
\end{Th}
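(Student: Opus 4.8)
The plan is to realise $\rho$ as a $\Z_p$-valued point of a global deformation problem for $\bar{\rho}$ with carefully chosen local conditions, following the local-to-global strategy of Hamblen and Ramakrishna \cite{hamblenramakrishna}. By conditions $\ref{two}$ and $\ref{three}$ the representation $\bar{\rho}$ is ramified only at $p$. Set $\Sigma_0=\{p,\infty\}$ and consider the functor of deformations of $\bar{\rho}$ to complete Noetherian local $\Z_p$-algebras with residue field $\F_p$ that are unramified outside $\Sigma_0$, have determinant equal to $\chi^{i+p^2(p-1)}$ (which reduces to $\bar{\chi}^i$ since $p-1\mid p^2(p-1)$), and at $p$ satisfy the diagonal deformation condition $\mathcal{C}_p$ of Definition $\ref{extraordinary}$; the twist of the determinant by $\chi^{p^2(p-1)}$, a character valued in $1+p^3\Z_p$, is inserted in order to control the image of the eventual lift. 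No condition is needed at $\infty$ because $p$ is odd. By Proposition $\ref{tangentequality}$, $\mathcal{C}_p$ is liftable and balanced in the sense of Definition $\ref{liftablebalanced}$. Write $\mathcal{L}_{\Sigma_0}$ for this family of local conditions and $\mathcal{L}_{\Sigma_0}^{\perp}$ for the dual family with respect to the pairing $\text{Ad}^0\bar{\rho}\times\text{Ad}^0\bar{\rho}(1)\to\F_p(1)$.

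The next step is cohomological bookkeeping. One shows $H^0(G_{\Q},\text{Ad}^0\bar{\rho})=0$: condition $\ref{one}$ says that $\beta$ is not a coboundary, so any endomorphism of $\bar{\rho}$ commuting with the image is scalar, and a nonzero scalar is not traceless. One also shows $H^0(G_{\Q},\text{Ad}^0\bar{\rho}(1))=0$: the Jordan--Hölder factors of $\text{Ad}^0\bar{\rho}(1)$ are $\F_p(\bar{\chi})$, $\F_p(\bar{\chi}^{i+1})$ and $\F_p(\bar{\chi}^{1-i})$, and the hypotheses $2\le i\le p-3$ force $i\not\equiv 0,\pm 1\pmod{p-1}$, so none of these is trivial and the socle carries no Galois invariants. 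Feeding these vanishings, the oddness of $\bar{\rho}$ (which holds because $i$ is odd), and the balancedness of $\mathcal{C}_p$ into the Greenberg--Wiles formula gives
\[\dim_{\F_p}H^1_{\mathcal{L}_{\Sigma_0}}\bigl(G_{\Q},\text{Ad}^0\bar{\rho}\bigr)=\dim_{\F_p}H^1_{\mathcal{L}_{\Sigma_0}^{\perp}}\bigl(G_{\Q},\text{Ad}^0\bar{\rho}(1)\bigr),\]
and, since a nice prime contributes $0$ to the balance, this equality persists after enlarging $\Sigma_0$ by nice primes.

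The crux of the argument, and the step I expect to be the main obstacle, is to adjoin a finite set $Q$ of auxiliary nice primes $q$, each carrying a smooth ramified local condition, so that $H^1_{\mathcal{L}_{\Sigma_0\cup Q}^{\perp}}(G_{\Q},\text{Ad}^0\bar{\rho}(1))=0$, the conditions at $p$ and $\infty$ being left untouched. A prime $q$ is nice for $\bar{\rho}$ when $\bar{\rho}(\mathrm{Frob}_q)$ is semisimple with eigenvalues in the ratio $q$; because $i$ is odd, $i-1$ is even and one may take $q\equiv -1\pmod p$, so such primes exist in abundance. One removes the dual Selmer group one dimension at a time: given a nonzero class $\phi$ in the current dual Selmer group, a Chebotarev argument produces a nice prime $q$ at which $\phi$ becomes ``visible'', and adjoining $q$ with its local condition strictly lowers the dual Selmer dimension. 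What makes the Chebotarev step succeed is that the splitting field of $\bar{\rho}$, the field cut out by $\text{Ad}^0\bar{\rho}(1)$, and the fields attached to the relevant cohomology classes are in sufficiently general position; this is exactly where $\mathcal{C}(\bar{\chi}^i)\neq 0$ (which both produces $\bar{\rho}$ and governs the global cohomology) together with $i$ odd and $i\neq\tfrac{p-1}{2}$ (the latter forcing the three characters $\mathbf{1}$, $\bar{\chi}^i$, $\bar{\chi}^{-i}$ to be pairwise distinct) are used, precisely as in \cite{hamblenramakrishna}. After finitely many steps one reaches $\Sigma=\Sigma_0\cup Q$ with $H^1_{\mathcal{L}_{\Sigma}^{\perp}}(G_{\Q},\text{Ad}^0\bar{\rho}(1))=0$, hence also $H^1_{\mathcal{L}_{\Sigma}}(G_{\Q},\text{Ad}^0\bar{\rho})=0$.

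Once the dual Selmer group vanishes, Poitou--Tate duality makes the $\mathcal{L}_{\Sigma}$-deformation problem unobstructed, so every lift modulo $p^{n}$ extends modulo $p^{n+1}$ and there is a continuous lift $\rho\colon G_{\Q}\to\text{GL}_2(\Z_p)$ of $\bar{\rho}$ which is unramified outside $\Sigma$, has $\det\rho=\chi^{i+p^2(p-1)}$, and satisfies $\mathcal{C}_p$ at $p$; in particular $\rho_{\restriction G_p}=\varphi_1\oplus\varphi_2$ is a sum of characters, hence abelian on $I_p$. Distinct admissible choices of $Q$ give lifts ramified at different sets of primes, so there are infinitely many such $\rho$. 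Finally, for the image: since $\det\rho$ is a power of $\chi$ it is trivial on $G_{\Q(\mu_{p^{\infty}})}$, so $\rho\bigl(G_{\Q(\mu_{p^{\infty}})}\bigr)\subseteq\text{SL}_2(\Z_p)$; the ramified condition at a nice prime forces $\rho(I_q)$ to be infinite, and, combined with the determinant twist by $\chi^{p^2(p-1)}$ and the local conditions, rules out $\rho\otimes\overline{\Q}_p$ being reducible or dihedral, so the Lie algebra of the Zariski closure of $\rho(G_{\Q(\mu_{p^{\infty}})})$ is all of $\mathfrak{sl}_2$. The classification of closed subgroups of $\text{SL}_2(\Z_p)$ then shows that $\rho(G_{\Q(\mu_{p^{\infty}})})$ contains the principal congruence subgroup, which completes the proof.
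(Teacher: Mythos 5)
Your overall strategy is the one the paper takes: run the Hamblen--Ramakrishna local-to-global machine with the diagonal condition of Definition \ref{extraordinary} substituted for the ordinary condition at $p$ (justified by Proposition \ref{tangentequality}), kill the dual Selmer group by adjoining nice primes, and read off lifts with the stated local properties. That part of your write-up is a faithful, more detailed expansion of what the paper delegates wholesale to \cite{hamblenramakrishna}, and the cohomological bookkeeping ($H^0$ vanishing, Greenberg--Wiles, nice primes contributing zero) is in order.

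The gap is in your final step, the assertion that $\rho(G_{\Q(\mu_{p^{\infty}})})$ contains the principal congruence subgroup. First, you assert that the ramification at nice primes together with the determinant twist ``rules out $\rho\otimes\overline{\Q}_p$ being reducible or dihedral,'' but this is exactly the hard point and cannot be waved at: the residual representation $\bar{\rho}$ is \emph{reducible}, so nothing about the residual image helps, and producing an irreducible lift of a reducible $\bar{\rho}$ is the main content of \cite{hamblenramakrishna}. The paper handles this by citing their Proposition 42, which is a genuine mod $p^2$ computation showing that the image of $\rho(G_{\Q(\mu_{p^{\infty}})})$ in $\text{SL}_2(\Z/p^2\Z)$ already contains the full principal congruence subgroup of $\text{SL}_2(\Z/p^2\Z)$. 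Second, even granting Zariski density, your argument only yields that $\rho(G_{\Q(\mu_{p^{\infty}})})$ is \emph{open} in $\text{SL}_2(\Z_p)$, hence contains $\ker\left(\text{SL}_2(\Z_p)\rightarrow \text{SL}_2(\Z/p^n\Z)\right)$ for \emph{some} $n$ --- not the principal congruence subgroup $\ker\left(\text{SL}_2(\Z_p)\rightarrow \text{SL}_2(\F_p)\right)$ that the theorem claims. To get the sharp statement you need the mod $p^2$ information and then the Serre-type bootstrapping of Lemma \ref{3.4}: a closed subgroup of $\text{SL}_2(\Z_p)$ whose image mod $p^2$ contains the principal congruence subgroup of $\text{SL}_2(\Z/p^2\Z)$ contains the principal congruence subgroup of $\text{SL}_2(\Z_p)$. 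Replace your Lie-algebra paragraph with that two-step argument and the proof is complete.
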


\par The lift $\rho$ gives rise to an extension $\Q(\rho)$ of $\Q(\mu_{p^{\infty}})$ which is taken to be the fixed field of $\text{ker}\rho \subset G_{\Q}$. This extension is unramified at primes above $p$. Furthermore, at a prime $l\neq p$, since the residual representation $\bar{\rho}$ is unramified, the lift $\rho$ is tamely ramified at all primes $l\in S/\{p\}$, we are left with the following result.
\begin{Cor}\label{main}
Suppose that $p\geq 5$ be a prime and $i\neq \frac{p-1}{2}$ an odd integer between $2\leq i\leq p-3$ for which the isotypic space $\mathcal{C}(\bar{\chi}^i)\neq 0$ (cf. $\ref{isotypicdef}$). There are infinitely many Galois extensions $L/\Q(\mu_{p^{\infty}})$ for which
\begin{itemize}
\item the Galois group $\text{Gal}(L/\Q(\mu_{p^{\infty}}))$ topologically isomorphic to a subgroup of $\text{SL}_2(\Z_p)$ which contains the principal congruence subgroup.
\item $L$ is unramified at primes above $p$ and ramified above finitely many rational primes at which it is tamely ramified.
\end{itemize}
\end{Cor}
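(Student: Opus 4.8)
The strategy is to read off Corollary \ref{main} from Theorem \ref{1.1} by a direct translation into the language of field extensions; the only step demanding genuine input (rather than formal manipulation of inertia subgroups) is the ramification behaviour at $p$.

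The hypotheses on $p$ and on the index $i$ are exactly those of Theorem \ref{1.1}, so I would first invoke that theorem to fix any one of the infinitely many lifts $\rho\colon G_{\Q}\to\text{GL}_2(\Z_p)$ of $\bar{\rho}$ enjoying the four listed properties, and set $L:=\Q(\rho)\cdot\Q(\mu_{p^{\infty}})$, equivalently the fixed field of $\ker\rho\cap G_{\Q(\mu_{p^{\infty}})}=\ker\bigl(\rho_{\restriction G_{\Q(\mu_{p^{\infty}})}}\bigr)$ in $\overline{\Q}$. Restricting $\rho$ then identifies $\text{Gal}(L/\Q(\mu_{p^{\infty}}))$ topologically with the image $\rho\bigl(G_{\Q(\mu_{p^{\infty}})}\bigr)$ (all groups in sight being profinite and $\rho$ continuous). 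Since $\det\rho=\chi^{i+p^2(p-1)}$ is a power of the cyclotomic character it is trivial on $G_{\Q(\mu_{p^{\infty}})}$, so this image lies in $\text{SL}_2(\Z_p)$; by the first bullet of Theorem \ref{1.1} it contains the principal congruence subgroup. This gives the first assertion of the corollary.

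The crux is the prime $p$. Because $\Q(\mu_{p^{\infty}})/\Q$ is totally ramified at $p$, the inertia subgroup of $L/\Q(\mu_{p^{\infty}})$ above $p$ is $\rho\bigl(I_{\Q_p(\mu_{p^{\infty}})}\bigr)$, where $I_{\Q_p(\mu_{p^{\infty}})}=\ker\bigl(\chi_{\restriction I_p}\bigr)$ is the inertia group of $\overline{\Q}_p/\Q_p(\mu_{p^{\infty}})$. So I must show that $\rho_{\restriction I_p}$ is trivial on $\ker\bigl(\chi_{\restriction I_p}\bigr)$, that is, that $\rho_{\restriction I_p}$ factors through the cyclotomic character on inertia. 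This is precisely what the diagonal deformation condition at $p$ (Definition \ref{extraordinary}) is designed to guarantee: there $\rho_{\restriction I_p}\simeq(\varphi_1)_{\restriction I_p}\oplus(\varphi_2)_{\restriction I_p}$ with each $(\varphi_j)_{\restriction I_p}$ a power of $\chi_{\restriction I_p}$, in accordance with $\varphi_1\varphi_2=\chi^{i+p^2(p-1)}$ and with the residual splitting $\bar{\rho}_{\restriction G_p}\simeq\bar{\chi}^i\oplus 1$. As a power of the cyclotomic character factors through $\chi_{\restriction I_p}$, I conclude $\rho\bigl(\ker(\chi_{\restriction I_p})\bigr)=1$, i.e.\ $L/\Q(\mu_{p^{\infty}})$ is unramified at every prime above $p$. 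I expect this to be the only real obstacle in the argument, not because it is deep but because it is the one place where the explicit shape of the local condition at $p$, rather than formalities, is used.

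Finally, let $S$ be a finite set of primes outside which $\rho$ is unramified (fourth bullet of Theorem \ref{1.1}), and let $\ell\in S$ with $\ell\neq p$. The residual representation $\bar{\rho}=\mtx{\bar{\chi}^i}{\ast}{0}{1}$ is unramified at $\ell$, since $\bar{\chi}$ ramifies only at $p$ and the upper-right cohomology class, coming from the Hilbert class field of $\Q(\mu_p)$, is everywhere unramified; hence $\rho(I_\ell)$ lies in the pro-$p$ principal congruence subgroup, so the pro-$\ell$ wild inertia at $\ell$ maps to $1$ under $\rho$. Thus $\rho$, and therefore $L/\Q(\mu_{p^{\infty}})$ (whose relevant wild inertia is a subquotient of that of $L/\Q$), is tamely ramified at $\ell$; since in addition $\Q(\mu_{p^{\infty}})/\Q$ is unramified at $\ell$, it follows that $L/\Q(\mu_{p^{\infty}})$ is ramified exactly at the finitely many primes lying over $S\setminus\{p\}$ and is tamely ramified there. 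To obtain \emph{infinitely many} such extensions, I would invoke that the lifts of Theorem \ref{1.1} can be arranged with arbitrarily large finite ramification sets $S$, so that any fixed $L$ can arise from only finitely many of them; this yields infinitely many fields $L$, completing the proof.
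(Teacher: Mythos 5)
Your proof is correct and follows essentially the same route as the paper's: identify $\text{Gal}(L/\Q(\mu_{p^{\infty}}))$ with $\rho(G_{\Q(\mu_{p^{\infty}})})$, use the diagonal shape of $\rho_{\restriction G_p}$ (abelian on inertia, hence factoring through $\chi_{\restriction I_p}$ by local class field theory) to get unramifiedness above $p$, and use triviality of $\bar{\rho}$ on $I_\ell$ to force $\rho(I_\ell)$ into the pro-$p$ principal congruence subgroup for tameness. The one quibble is that $(\varphi_j)_{\restriction I_p}$ need not literally be an integer power of $\chi_{\restriction I_p}$ — it merely factors through it, which is the weaker (and correct) statement your argument actually uses; your added care about why infinitely many distinct fields $L$ arise (via growing ramification sets) addresses a point the paper's proof leaves implicit.
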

\begin{Prop}\label{lastprop}
The conclusion of Corollory $\ref{main}$ is satisfied at any prime $p$ such that 
\begin{enumerate}
    \item $p\geq 5$,
    \item $p$ is irregular,
    \item $p\equiv 1\mod{4}$,
    \item $p$ does not divide the class number of the totally real subfield $\Q(\mu_p)^+\subset \Q(\mu_p)$, i.e. Vandiver's conjecture is satisfied at $p$.
\end{enumerate}
\end{Prop}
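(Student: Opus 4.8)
The plan is to check that the arithmetic hypotheses $(1)$--$(4)$ force the hypothesis of Theorem \ref{1.1} (equivalently of Corollary \ref{main}), namely the existence of an odd integer $i$ with $2\leq i\leq p-3$, $i\neq\frac{p-1}{2}$, and $\mathcal{C}(\bar{\chi}^i)\neq 0$; the rest of the statement is then exactly the content of Corollary \ref{main}. The argument is a bookkeeping exercise with the $\bar{\chi}$-eigenspace decomposition of $\mathcal{C}$, combined with Herbrand's theorem and the inputs $(2)$ and $(4)$.

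First I would use Vandiver's hypothesis $(4)$ to kill the even part of $\mathcal{C}$. It is standard that $p\nmid h(\Q(\mu_p)^+)$ is equivalent to the vanishing of $\mathcal{C}^+:=\bigoplus_{j\text{ even}}\mathcal{C}(\bar{\chi}^j)$: since $[\Q(\mu_p):\Q(\mu_p)^+]=2$ is prime to $p$, the norm and conorm maps identify $\text{Cl}(\Q(\mu_p)^+)\otimes\F_p$ with $\mathcal{C}^+$, so $\mathcal{C}^+=0$ whenever $p\nmid h(\Q(\mu_p)^+)$. Hence $\mathcal{C}=\mathcal{C}^-=\bigoplus_{j\text{ odd}}\mathcal{C}(\bar{\chi}^j)$. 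Next, hypothesis $(2)$ (irregularity, i.e.\ $p\mid h(\Q(\mu_p))$) says exactly that $\mathcal{C}\neq 0$, so there is an odd index $i_0$ with $1\leq i_0\leq p-2$ and $\mathcal{C}(\bar{\chi}^{i_0})\neq 0$.

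It then remains to place $i_0$ in the admissible range. Here I would invoke Herbrand's theorem (the elementary half of Herbrand--Ribet; see e.g.\ Washington, \emph{Introduction to Cyclotomic Fields}): for odd $i$ with $3\leq i\leq p-2$, $\mathcal{C}(\bar{\chi}^i)\neq 0$ implies that $p$ divides the numerator of the Bernoulli number $B_{p-i}$, while $\mathcal{C}(\bar{\chi}^1)=0$ (the $\bar{\chi}$-eigenspace of the $p$-class group is always trivial). Applying this to $i_0$: we get $i_0\neq 1$; and since the numerator of $B_2=\tfrac16$ equals $1$, we get $p-i_0\neq 2$, i.e.\ $i_0\neq p-2$. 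As $p-i_0$ is even and $\geq 2$, this forces $p-i_0\geq 4$, so $3\leq i_0\leq p-4\leq p-3$, with $i_0$ odd. Finally hypothesis $(3)$, $p\equiv 1\bmod 4$, makes $\frac{p-1}{2}$ even, hence $i_0\neq\frac{p-1}{2}$. Thus $i_0$ is an odd integer with $2\leq i_0\leq p-3$, $i_0\neq\frac{p-1}{2}$ and $\mathcal{C}(\bar{\chi}^{i_0})\neq 0$, so Theorem \ref{1.1} applies at $p$ and yields the conclusion of Corollary \ref{main}. (Hypothesis $(1)$ is used only inside Theorem \ref{1.1} and to guarantee $p\nmid 6$; note also that an irregular prime is automatically $\geq 37$, so the range $\{3,\dots,p-3\}$ is nonempty and the statement is non-vacuous.)

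I expect the only genuinely delicate points to be the index conventions in Herbrand--Ribet --- one must track carefully whether the statement is phrased through $\bar{\chi}^k$ or $\bar{\chi}^{1-k}$ and over which range of $k$, so that the divisibility lands on $B_{p-i}$ as written --- and the two boundary vanishings $\mathcal{C}(\bar{\chi}^{p-2})=0$ and $\mathcal{C}(\bar{\chi}^{1})=0$: the former is immediate from $B_2=\tfrac16$, while the latter is a standard (but not entirely trivial) classical fact that should be cited rather than reproved. Everything else is routine.
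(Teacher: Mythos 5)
Your argument is correct and takes essentially the same route as the paper's proof: both use Vandiver's hypothesis to kill the even eigenspaces, irregularity to produce a nonzero odd eigenspace $\mathcal{C}(\bar{\chi}^{i})$, the classical vanishing $\mathcal{C}(\bar{\chi})=0$ together with Herbrand's theorem and $B_2=\tfrac{1}{6}$ to exclude $i=1$ and $i=p-2$, and $p\equiv 1\bmod 4$ to exclude $i=\tfrac{p-1}{2}$. The only difference is that you spell out the norm/conorm identification behind the Vandiver step, which the paper takes for granted.
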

\par This seems to indicate that there are infinitely many primes at which $p$ the implication of Corollory $\ref{main}$ is satisfied.
\par Such $p$-adic extensions $L$ were first constructed by Ohtani \cite{ohtani} and Blondeau \cite{blondeau} and their methods relied on lifting suitable irreducible Galois representations which are \textit{extraordinary} at $p$. The construction in \cite{ohtani} and \cite{blondeau} relies on the existence of an eigenform $f$ with \textit{companion forms} (and thus extraordinary at $p$) such the image of the residual representation $\bar{\rho}_f$ contains $\text{SL}_2(\F_p)$. Computations for $p<3500$ show that there are precisely four primes $107, 139, 271$ and $379$ for which such an eigenform $f$ exists.

\subsection*{Acknowledgements}
I am very grateful to my advisor Ravi Ramakrishna for introducing me to his Galois-theoretic lifting method and for making helpful suggestions. I would also like to thank Christian Maire for many fruitful conversations. I would also like to thank the anonymous referee for some valuable suggestions.
\section{Summary of Notation}
\begin{itemize}
\item For all rational primes $v$ fix a choice of an algebraic closure of $\bar{\Q}_v$ and a choice of an embedding $\iota_v:\bar{\Q}\hookrightarrow \bar{\Q}_v$ extending $\Q\hookrightarrow \Q_v$. The absolute Galois group of $\Q_v$ is denoted by $G_v:=\text{Gal}(\bar{\Q}_v/\Q_v)$. The embedding $\iota_v$ coincides with an inclusion $G_v\hookrightarrow G_{\Q}$.
\item Let $p\geq 5$ be a prime and let $\text{Cl}(\Q(\mu_p))$ denote the class group of $\Q(\mu_p)$.
\item The $\F_p[G_{\Q}]$-module $\mathcal{C}:=\text{Cl}(\Q(\mu_p))\otimes \F_p$ decomposes into a direct sum of isotypic-components
$\mathcal{C}=\bigoplus_{i=0}^{p-2}\mathcal{C}(\bar{\chi}^i)$.
\item We assume that there exists an odd integer $i\neq (p-1)/2$ for which $2\leq i\leq p-3$ and such that the isotypic component $\mathcal{C}(\bar{\chi}^i)\neq 0$. The adjoint Galois module associated with $\bar{\rho}$ of Theorem $\ref{1.1}$ by $Ad^0\bar{\rho}$. It consists of matrices $\mtx{a}{b}{c}{-a}$ of trace zero over $\F_p$ on which $g\in G_{\Q}$ acts via conjugation by $\bar{\rho}(g)$.
\item Let $\mathfrak{e}$ be the matrix $\mathfrak{e}=\mtx{1}{0}{0}{-1}$. The subspace consisting of diagonal matrices is denoted by $\D:=\F_p \mathfrak{e}$.
\item The ring of dual numbers $\F_p[\epsilon]:=\F_p[T]/(T^2)$ and $\epsilon$ denotes $T$ mod $T^2$.
\item Let $\mathcal{C}_{\Z_p}$ be the category of coefficient rings over $\Z_p$. The objects of $\mathcal{C}_{\Z_p}$ consist of Noetherian local $\Z_p$-algebras $R$ with maximal ideal $\mathfrak{m}$ endowed with an isomorphism \[\phi:R/\mathfrak{m}\xrightarrow{\sim} \F_p.\]
A morphism of coefficient rings is a homomorphism of $\Z_p$-algebras compatible with residual isomorphisms. 
\end{itemize}
\section{Lifting to Characteristic Zero}
We proceed to describe the local deformation condition at $p$ which will in particular ensure that our deformations are unramified over $\Q(\mu_{p^{\infty}})$ at all primes $\mathfrak{p}|p$.
\begin{Def}\label{extraordinary}
Let $\mathcal{F}_p:\mathcal{C}_{\Z_p}\rightarrow \text{Sets}$ be the functor of deformations of $\bar{\rho}$ which consist of a sum of two characters, we refer deformations $\mathcal{F}_p$ as \textit{diagonal}. In greater detail, $\mathcal{F}_p(R)$ consists of deformations $\rho_R:G_p\rightarrow \text{GL}_2(R)$ of $\bar{\rho}$ such that 
\begin{itemize}
\item $\rho_{\restriction G_p}\simeq \varphi_1\oplus \varphi_2$ where $\varphi_1$ and $\varphi_2$ are two characters (possibly ramified)
\item $\det \rho=\chi^{i+p^2(p-1)}$.
\end{itemize}The tangent space $\mathcal{N}_p$ is defined as the set of deformations of $\bar{\rho}$ to the dual numbers $\mathcal{F}_p(\F_p[\epsilon])$ which has a natural structure of an $\F_p$ vector space and is realized as a subspace of $H^1(G_p, Ad^0\bar{\rho})$.
\end{Def} 
\par The functor $\mathcal{F}_p$ is a liftable deformation condition. Observe that since $G_p$ acts diagonally, the $1$ dimensional space $\text{Diag}(\bar{\rho}_{\restriction G_p})$ is a summand of $Ad^0\bar{\rho}_{\restriction G_p}$ and \[\A_{\restriction G_p}\simeq \F_p(\bar{\chi}_{\restriction G_p}^i)\oplus \text{Diag}(\bar{\rho}_{\restriction G_p})\oplus \F_p(\bar{\chi}_{\restriction G_p}^{-i}).\]
\begin{Def}\label{liftablebalanced}
Let $\mathcal{F}$ be a deformation condition for $\bar{\rho}_{\restriction G_p}$ with tangent space $\mathcal{N}_{\mathcal{F}}$. 
\begin{itemize}
    \item The deformation condition $\mathcal{F}$ is \textit{liftable} if for every small extension $f:R\rightarrow S$, the induced map $f_*:\mathcal{F}(R)\rightarrow \mathcal{F}(S)$ is surjective.
    \item The deformation condition $\mathcal{F}$ is \textit{balanced} if
    \[\dim \mathcal{N}_p=\dim H^0(G_p, \text{Ad}^0\bar{\rho})+1=2.\]
\end{itemize}
\end{Def}
\begin{Prop}\label{tangentequality}The deformation condition $\mathcal{F}_p$ is liftable and balanced.
\end{Prop}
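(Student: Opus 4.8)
The plan is to check the two conditions of Definition~\ref{liftablebalanced} one at a time, exploiting throughout the $G_p$-decomposition $\A_{\restriction G_p}\simeq \F_p(\bar{\chi}^i_{\restriction G_p})\oplus \D\oplus \F_p(\bar{\chi}^{-i}_{\restriction G_p})$ recorded above, together with the arithmetic input that $\bar{\chi}^{\pm i}_{\restriction G_p}\neq 1$ --- which holds because the hypothesis $2\le i\le p-3$ forces $(p-1)\nmid i$.

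For liftability, I would take a small extension $f\colon R\to S$ (so $\ker f\simeq \F_p$) and a deformation $\rho_S\in\mathcal{F}_p(S)$, and argue in three steps. First, replace $\rho_S$ by a strictly equivalent genuinely diagonal deformation: the $S$-lines on which $\rho_S$ acts by $\varphi_1$ and $\varphi_2$ reduce modulo $\mathfrak{m}_S$ to the two eigenlines of $\bar{\rho}_{\restriction G_p}$, which are distinct because $\bar{\chi}^i_{\restriction G_p}\neq 1$, so conjugating by a matrix congruent to the identity puts $\rho_S$ in the form $\varphi_1^S\oplus\varphi_2^S$. Second, lift $\varphi_1^S$ to a character $\varphi_1^R\colon G_p\to R^{\times}$; this is unobstructed since the obstruction lies in $H^2(G_p,\F_p)$ with trivial coefficients, which vanishes for $p$ odd (equivalently $\mu_p\not\subset\Q_p$, by local Tate duality). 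Third, set $\varphi_2^R:=\chi^{i+p^2(p-1)}(\varphi_1^R)^{-1}$; then $\rho_R:=\varphi_1^R\oplus\varphi_2^R$ has exactly the prescribed determinant, and using $\varphi_1^S\varphi_2^S=\chi^{i+p^2(p-1)}$ one checks that $\varphi_2^R$ reduces to $\varphi_2^S$, so $\rho_R\in\mathcal{F}_p(R)$ lies over $\rho_S$. This establishes surjectivity of $f_*$.

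For the balanced condition, I would first note that $H^0(G_p,\A)=H^0(G_p,\D)=\F_p$, since $G_p$ acts trivially on $\D$ and through the nontrivial characters $\bar{\chi}^{\pm i}$ on the outer summands; so it remains to show $\dim\mathcal{N}_p=2$. The crux is to identify $\mathcal{N}_p$ with the image of $H^1(G_p,\D)$ in $H^1(G_p,\A)$. For a class represented by a cocycle $c$, the deformation $(I+\epsilon c)\bar{\rho}_{\restriction G_p}$ has determinant $\chi^{i+p^2(p-1)}$ over $\F_p[\epsilon]$ automatically because $c$ is trace-zero; and, decomposing $c$ along the three summands, a short computation with coboundaries of constant matrices shows that this deformation is strictly equivalent to a sum of two characters precisely when the $\F_p(\bar{\chi}^i)$- and $\F_p(\bar{\chi}^{-i})$-components of the class vanish --- i.e.\ when the class lies in the $\D$-summand --- and that conjugating by the residual diagonal (or monomial) torus does not change this. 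Since $\D$ is a direct summand of $\A$ as a $G_p$-module, we get $\dim\mathcal{N}_p=\dim H^1(G_p,\F_p)$, which is $2$ by the local Euler characteristic formula (using $\dim H^0(G_p,\F_p)=1$ and $H^2(G_p,\F_p)=0$), or directly because $H^1(G_p,\F_p)=\mathrm{Hom}(\Q_p^{\times},\F_p)$ is $2$-dimensional. Hence $\dim\mathcal{N}_p=2=\dim H^0(G_p,\A)+1$.

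I expect the main obstacle to be this identification of $\mathcal{N}_p$: one must pin down exactly which cohomology classes give deformations conjugate --- not just strictly equivalent --- to a direct sum of two characters, and verify that allowing conjugation by the residual torus does not enlarge $\mathcal{N}_p$ beyond the diagonal summand. The parallel reduction to diagonal form used in the liftability argument is of the same flavor but easier, since there the two residual eigenvalues are already distinct over $\F_p$.
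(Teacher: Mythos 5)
Your proof is correct and follows essentially the same route as the paper: the balanced condition is verified exactly as in the paper, by identifying $\mathcal{N}_p$ with $H^1(G_p,\D)$ via the three-summand decomposition of $\A_{\restriction G_p}$ and computing $\dim H^1(G_p,\F_p)=2$ by the local Euler characteristic formula and duality (or local class field theory). The only difference is that for liftability the paper simply cites Lemma 4.5 of \cite{blondeau}, whereas you supply the underlying argument (diagonalize over $S$, lift one character unobstructedly since $H^2(G_p,\F_p)=0$, and define the second by the determinant condition), which is a correct and self-contained substitute.
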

\begin{proof} That $\mathcal{F}_p$ is liftable follows from Lemma 4.5 of \cite{blondeau}. We show that it is balanced, i.e. we deduce that \[\dim \mathcal{N}_p=\dim H^0(G_p, \text{Ad}^0\bar{\rho})+1=2.\]Explicitly, $\mathcal{N}_p$ is the set of elements $X\in H^1(G_p, Ad^0\bar{\rho})$ for which the twist $\bar{\rho}(Id+\epsilon X)$ is diagonal, we deduce that $\mathcal{N}_p=H^1(G_p, \text{Diag}(\bar{\rho}))$.
\par Since $\bar{\chi}_{\restriction G_p}^i\neq 1$, 
\[\begin{split}&\dim H^0(G_p, \A)\\
&=\dim H^0(G_p, \F_p(\bar{\chi}^i))+\dim H^0(G_p, \F_p)+\dim H^0(G_p, \F_p(\bar{\chi}^{-i}))\\&=1.\\
\end{split}\]
\par We employ the Euler characteristic formula and local duality to compute $\dim \mathcal{N}_p$
\[\begin{split}\dim \mathcal{N}_p&=\dim H^1(G_p, \text{Diag}(\bar{\rho}))\\
&=1+\dim H^0(G_p, \text{Diag}(\bar{\rho}))+\dim H^2(G_p, \text{Diag}(\bar{\rho}))\\
&=1+\dim H^0(G_p, \F_p)+\dim H^0(G_p, \F_p(\bar{\chi}))\\
&=\dim H^0(G_p, \A)+1=2.\\
\end{split}\]
\end{proof}
\par The assumptions on $\bar{\rho}$ satisfy those on the assumptions laid out on the residual representations examined in \cite{hamblenramakrishna}.
\begin{Lemma}\label{firstlemma} The conditions stipulated in \cite[Theorem 2]{hamblenramakrishna} are satisfied by the representation $\bar{\rho}$.
\end{Lemma}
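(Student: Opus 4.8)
The plan is to go down the list of conditions that \cite[Theorem 2]{hamblenramakrishna} imposes on a residually reducible representation and match each one against the construction of $\bar\rho$ recalled in the introduction. These conditions are of three kinds: the global shape and parity of $\bar\rho$; indecomposability of $\bar\rho_{\restriction G_{\Q(\mu_p)}}$ together with genericity of the character $\bar\chi^i$; and the availability at $p$ of a liftable, balanced local deformation condition whose tangent line sits in the correct position inside $H^1(G_p,\A)$.

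The first two kinds are read off from the construction. By definition $\bar\rho=\mtx{\bar\chi^i}{\beta}{0}{1}$ is reducible of the required upper-triangular form, and it is odd because $i$ odd gives $\bar\chi^i(c)=-1$, hence $\det\bar\rho(c)=-1$. Since $\mathcal C(\bar\chi^i)\neq 0$ the chosen $\F_p$-line produces $L\neq\Q(\mu_p)$, so $\beta$ is a non-zero class and $\bar\rho_{\restriction G_{\Q(\mu_p)}}=\mtx{1}{\beta}{0}{1}$ is indecomposable; this is condition \ref{one}. The numerical constraints that $i$ is odd, $2\le i\le p-3$, and $i\neq(p-1)/2$ are exactly what force $\bar\chi^i\notin\{1,\bar\chi,\bar\chi^{-1}\}$ and $\bar\chi^{2i}\neq 1$: the first gives $H^0(G_{\Q},\A)=0$ and $H^0(G_{\Q},\A\otimes\bar\chi)=0$, which is what makes the Greenberg--Wiles formula bookkeeping of \cite{hamblenramakrishna} balance, while the second (together with the first) makes $\A_{\restriction G_p}\simeq\F_p(\bar\chi^i)\oplus\D\oplus\F_p(\bar\chi^{-i})$ a sum of three pairwise distinct characters, as used in the local analysis. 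I would also record the auxiliary facts that $\bar\rho$ is ramified only at $p$ (since $\beta$ is everywhere unramified by condition \ref{two} and $\bar\chi^i$ ramifies only at $p$), so one starts from minimal ramification; that $\bar\rho_{\restriction G_p}$ is the split sum $\bar\chi^i_{\restriction G_p}\oplus 1$ with $\bar\chi^i_{\restriction G_p}\neq 1$ (condition \ref{three}, using that $\bar\chi$ is ramified at $p$), so the diagonal condition below is even meaningful; and that the image of $\bar\rho$ contains a non-trivial unipotent element (since $\beta\neq 0$) and an element with distinct eigenvalues (since $\bar\chi^i\neq 1$), so that the Chebotarev arguments producing auxiliary ramified primes in \cite{hamblenramakrishna} go through unchanged.

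The one genuine point of departure from \cite{hamblenramakrishna} is the local condition at $p$: Hamblen and Ramakrishna run the argument with the \emph{ordinary} condition, whereas here one uses the \emph{diagonal} condition $\mathcal F_p$ of Definition \ref{extraordinary}. The only properties of this condition that the local-to-global argument consumes are liftability and balancedness, with tangent space $\mathcal N_p=H^1(G_p,\D)$ of dimension $\dim H^0(G_p,\A)+1$, and these are precisely Proposition \ref{tangentequality}; assembling the verifications above then yields the lemma. I expect the main obstacle to be organizational rather than mathematical: one must line up Hamblen--Ramakrishna's hypotheses term by term, checking in particular that every use of the ordinary condition in \cite{hamblenramakrishna} relies only on liftability, balancedness, and the position of the tangent line, all of which Proposition \ref{tangentequality} supplies. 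One must also confirm that whatever input \cite[Theorem 2]{hamblenramakrishna} needs about the structure of $\mathcal C$ beyond $\mathcal C(\bar\chi^i)\neq 0$ --- should the dual Selmer computation require the vanishing of an even isotypic component of $\mathcal C$ --- is furnished by the Vandiver-type hypothesis that reappears in Proposition \ref{lastprop}.
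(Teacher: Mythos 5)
Your proposal matches the paper's proof in substance: the paper likewise runs through Hamblen--Ramakrishna's hypotheses one by one --- $p\neq 2$, indecomposability of $\bar{\rho}_{\restriction G_{\Q(\mu_p)}}$ from $\beta\neq 0$, the conditions $\bar{\chi}^{2i}\neq 1$ and $\bar{\chi}^i\neq\bar{\chi}^{\pm1}$ from the constraints on $i$, oddness from $i$ odd, and the fact that $\bar{\rho}_{\restriction G_p}$ is not an unramified unipotent representation since $(p-1)\nmid i$. The additional material you raise about the diagonal condition at $p$ and about Vandiver-type hypotheses is handled elsewhere in the paper (Proposition \ref{tangentequality} and Proposition \ref{lastprop}, respectively) and is not part of this lemma, whose content is only the verification of the residual conditions.
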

\begin{proof} The reducible representation $\bar{\rho}=\mtx{\phi}{\ast}{0}{1}$ with $\phi=\bar{\chi}^i$, we recall that $i\neq (p-1)/2$ is odd and $2\leq i\leq p-3$. We enumerate the six conditions and show that they are satisfied:
\begin{itemize}
\item Condition (0) is satisfied since $p\neq 2$.
\item Condition (1) requires that $\bar{\rho}$ is indecomposable (or in other words, not semi-simple), this follows by construction as the extension $E/\Q(\mu_p)$ is a non-trivial extension.
\item Condition (2) requires that $\phi^2\neq 1$, or equivalently, $2i\not\equiv 0$ mod $(p-1)$ which is satisfied.
\item Condition (3) requires that $\phi\neq \bar{\chi}^{\pm 1}$ which follows from the assumption on $i$.
\item Condition (4) is automatically satisfied since the field in question $\F_q$ is $\F_p$.
\item Since $i$ is odd, $\bar{\rho}$ is odd. Condition (5) requires that $\bar{\rho}$ that $\bar{\rho}_{\restriction G_p}$ is not unramified of the form $\mtx{1}{\ast}{0}{1}$ (where $\ast$ may be trivial). The integer $i$ is divisible by $p-1$ and as a consequence, this condition is satisfied.
\end{itemize}
\end{proof}
In the proof of Theorem $\ref{1.1}$, the following Lemma is applied in deducing that the image of a suitably constructed lift $\rho$ of $\bar{\rho}$ contains the principal congruence subgroup of $\text{SL}_2(\Z_p)$.
\begin{Lemma}\label{3.4}
Let $p\geq 5$ be a prime, $X$ be a closed subgroup of $\text{SL}_2(\Z_p)$ and let $X_2$ be the image of $X$ in $\text{SL}_2(\Z/p^2\Z)$. Suppose that $X_2$ contains the principal congruence subgroup of $\text{SL}_2(\Z/p^2\Z)$, then $X$ contains the principal congruence subgroup of $\text{SL}_2(\Z_p)$.
\end{Lemma}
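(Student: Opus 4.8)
The plan is to treat the principal congruence subgroup
$\Gamma(p):=\ker\!\big(\mathrm{SL}_2(\Z_p)\to\mathrm{SL}_2(\Z/p\Z)\big)$
as a finitely generated pro-$p$ group and to reduce the statement to the Frattini property of such groups: a closed subgroup that surjects onto the Frattini quotient is the whole group. I would write $\Gamma(p^n):=\ker\!\big(\mathrm{SL}_2(\Z_p)\to\mathrm{SL}_2(\Z/p^n\Z)\big)$, so that the principal congruence subgroup of $\mathrm{SL}_2(\Z/p^2\Z)$ is the image of $\Gamma(p)$ under reduction mod $p^2$, with kernel $\Gamma(p^2)$, and so that $\Gamma(p)$ is topologically generated by three elements.

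The one genuinely arithmetic input I would isolate is the identity $\Phi(\Gamma(p))=\overline{\Gamma(p)^p\,[\Gamma(p),\Gamma(p)]}=\Gamma(p^2)$. Here $[\Gamma(p),\Gamma(p)]\subseteq\Gamma(p^2)$ is immediate from $(I+pA)(I+pB)(I+pA)^{-1}(I+pB)^{-1}\equiv I\pmod{p^2}$, and the binomial expansion gives $(I+pM)^p\equiv I+p^2M\pmod{p^3}$, using $p\mid\binom{p}{2}$ and $p^3\mid\binom{p}{k}p^{k}$ for $k\geq2$ when $p$ is odd; one then upgrades this to $\Gamma(p)^p=\Gamma(p^2)$ by the standard theory of uniform pro-$p$ groups, $p\geq5$ being comfortably in the range where $\Gamma(p)$ is uniform. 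In particular $\Gamma(p)/\Phi(\Gamma(p))=\Gamma(p)/\Gamma(p^2)\cong(\Z/p\Z)^{3}$.

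Granting this, the rest is formal. The hypothesis that $X_2$ contains the principal congruence subgroup of $\mathrm{SL}_2(\Z/p^2\Z)$ says exactly that for every $g\in\Gamma(p)$ there is $x\in X$ with $x\equiv g\pmod{p^2}$; such an $x$ reduces to the identity mod $p$, hence lies in $\Gamma(p)$, so the closed subgroup $Y:=X\cap\Gamma(p)$ surjects onto $\Gamma(p)/\Gamma(p^2)=\Gamma(p)/\Phi(\Gamma(p))$. Since $\Phi(\Gamma(p))$ is the intersection of the maximal open subgroups of $\Gamma(p)$, a closed subgroup $Y$ with $Y\,\Phi(\Gamma(p))=\Gamma(p)$ is contained in no maximal open subgroup and therefore equals $\Gamma(p)$; thus $\Gamma(p)=Y\subseteq X$, which is the assertion. (Alternatively, one may avoid quoting pro-$p$ structure theory and instead show $(X\cap\Gamma(p))\,\Gamma(p^{n})=\Gamma(p)$ for all $n$ by induction, using at each step that $\Gamma(p^{n})/\Gamma(p^{n+1})$ is spanned by $p$-th powers of elements of $\Gamma(p^{n-1})$ together with commutators, and then intersecting over $n$ while using that $X\cap\Gamma(p)$ is closed.) The only real obstacle is the bookkeeping behind $\Phi(\Gamma(p))=\Gamma(p^2)$; once that is in place, the lifting of the congruence subgroup is automatic.
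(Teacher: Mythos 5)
Your proof is correct and is essentially the argument the paper relies on: the paper's proof simply defers to Lemma 3 of Serre (Chapter IV, \S 3.4 of \emph{Abelian $\ell$-adic representations}), whose proof is precisely your parenthetical induction using the surjectivity of the $p$-th power map $\Gamma(p^n)\rightarrow\Gamma(p^{n+1})$ together with closedness of $X$. Your main packaging via $\Phi(\Gamma(p))=\Gamma(p^2)$ and the Frattini property is just an abstract reformulation of that same computation.
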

\begin{proof}
The proof follows from that of Lemma 3 in \cite[Chapter 4, Section 3.4]{serre} with very little modification.
\end{proof}
\begin{proof}(of Theorem $\ref{1.1}$)
\par This result shall follow from the main result of \cite{hamblenramakrishna} after a single modification is made to their construction. Their method relies on the existence of a balanced liftable deformation condition at each prime at which the residual Galois representation is allowed to ramify. There are cohomological obstructions to lifting a residual Galois representation which satisfies these local conditions. More specifically if a certain \textit{Selmer group} does vanish the local to global deformation theoretic construction can be applied. On adjoining some auxiliary deformation conditions at a finite set of \textit{trivial primes} (cf. Definition 12 in \cite{hamblenramakrishna}) the associated Selmer group can be shown to vanish (cf. Proposition 46 in \cite{hamblenramakrishna}). Hamblen and Ramakrishna work throughout with the ordinary deformation condition (cf. \cite{RamFM} and \cite{Taylor}) at $p$. Instead, we shall prescribe the diagonal deformation condition which is also a balanced and liftable deformation. This was established in Proposition $\ref{tangentequality}$. The construction of Hamblen and Ramakrishna does not in any specific way utilize the ordinary deformation condition and the diagonal deformation condition may as well be used in its place since this condition is also liftable and balanced. It follows that there are infinitely many characteristic zero deformations $\rho$ satisfying the conditions of the main theorem.
\par In greater detail, it is a consequence of Proposition 42 of \cite{hamblenramakrishna} that \[\text{image}\lbrace\rho(G_{\Q(\mu_{p^{\infty}})})\rightarrow \text{SL}_2(\Z/p^2\Z)\rbrace\] contains the principal congruence subgroup. It follows from Lemma $\ref{3.4}$ that $\rho(G_{\Q(\mu_{p^{\infty}})})$ contains the principal congruence subgroup in $\text{SL}_2(\Z_p)$.
\end{proof}
\begin{proof}(of Corollory \ref{main})
\par Let $\rho$ be a lift of $\bar{\rho}$ satisfying the conditions of Theorem $\ref{main}$. We let $L$ be the fixed field of $\rho$. The infinite cyclotomic field $\Q(\mu_{p^{\infty}})$ is the fixed field of $\text{det}\rho=\chi^{i+p^2(p-1)}$. Since $\rho(G_{\Q(\mu_{p^{\infty}})})$ contains the principal congruence subgroup of $\text{SL}_2(\Z_p)$ we see that $\text{Gal}(L/\Q(\mu_{p^{\infty}}))$ is topologically isomorphic to a subgroup of $\text{SL}_2(\Z_p)$ which contains the principal congruence subgroup.
\par The local representation $\rho_{\restriction G_p}$ is a sum of characters $\varphi_1$ and $\varphi_2$, we deduce that $L$ is unramified at all primes above $p$.
\par Since $\bar{\rho}_{\restriction G_{\Q(\mu_p)}}$ is unramified at all primes, it follows that at the primes at which $L$ is ramified, $L$ must be tamely ramified.
\end{proof}
\begin{proof}(of Proposition $\ref{lastprop}$)
\par Indeed if $p$ is such a prime, since $p$ is irregular, the quotient $\mathcal{C}=\text{Cl}(\Q(\mu_p))\otimes \F_p\neq 0$. By assumption each even eigenspace $\mathcal{C}(\bar{\chi}^{2j})=0$. Thus there exists an odd integer $1\leq i\leq p-2$ for which $\mathcal{C}(\bar{\chi}^{i})\neq 0$. Since $p\equiv 1\mod{4}$, and $i$ is odd, we have that $i\neq \frac{p-1}{2}$. On the other hand, $i\neq 1$ since $\mathcal{C}(\bar{\chi})=0$ (cf. Proposition 6.16 of \cite{washington}). By Herbrand's theorem implies that $p$ divides the numerator of the Bernoulli number $B_{p-i}$. Since $B_2=\frac{1}{6}$ we deduce that $i\neq p-2$ and thus lies in the range $2\leq i\leq p-3$.
\end{proof}

\end{document}